\newcommand{\keywords}[1]{\par\addvspace\baselineskip
\noindent\keywordname\enspace\ignorespaces#1}
\newcommand{\Z}{\mathbb{Z}}
\newcommand{\N}{\mathbb{N}}
\newcommand{\M}{\mathbb{M}}
\newcommand{\B}{\mathcal{B}}
\newcommand{\X}{\mathcal{X}}
\newcommand{\emptycell}{
	\begin{tikzpicture}
	\draw[thick,white] (.25,0) -- (0,.25);
	\draw (0,0) rectangle (.25,.25);
	\end{tikzpicture}
}
\newcommand{\leftcell}{
	\begin{tikzpicture}
	\draw (0,0) rectangle (.25,.25);
	\draw[thick] (.25,0) -- (0,.25);
	\end{tikzpicture}
}
\newcommand{\rightcell}{
	\begin{tikzpicture}
	\draw (0,0) rectangle (.25,.25);
	\draw[thick,] (0,0) -- (.25,.25);
	\end{tikzpicture}
}
\newcommand{\bothcell}{
	\begin{tikzpicture}
	\draw (0,0) rectangle (.25,.25);
	\draw[thick] (.25,0) -- (0,.25);
	\draw[thick] (0,0) -- (.25,.25);
	\end{tikzpicture}
}
\begin{document}

\mainmatter

\title{Trace Complexity of Chaotic Reversible Cellular Automata\thanks{Research supported by the Academy of Finland Grant 131558}}

\author{
Jarkko Kari
\and
Ville Salo
\and
Ilkka T\"orm\"a
}

\institute{
		TUCS -- Turku Centre for Computer Science, Finland, \\
		University of Turku, Finland \\
		\mails
}

\maketitle

\begin{abstract}
Delvenne, K\r{u}rka and Blondel have defined new notions of computational complexity for arbitrary symbolic systems, and shown examples of effective systems that are computationally universal in this sense. The notion is defined in terms of the trace function of the system, and aims to capture its dynamics. We present a Devaney-chaotic reversible cellular automaton that is universal in their sense, answering a question that they explicitly left open. We also discuss some implications and limitations of the construction.
\keywords{cellular automaton, reversible, chaos, computational complexity, trace, symbolic system}
\end{abstract}

\section{Introduction}
\label{sec:Intro}

A significant branch of dynamical systems research is the study of computability and computational complexity of finitely presented systems. In the literature, there are usually multiple incomparable notions of computability and computational universality for a sufficiently popular model, like cellular automata \cite{Sm71,Wo84,Co04}. Traditionally, for a model to be considered computationally universal, it is sufficient for it to be able to simulate the computation process of any Turing machine in a suitably transparent way. However, seemingly minor variations to the formal definition (if one is presented) may reduce a universal system into a trivial one. A related notion is that of \emph{intrinsic} universality, which refers to the ability of simulating any other instance of the same model in some formally defined way. In cellular automata, intrinsic universality is usually defined with respect to block simulations, although in earlier research this notion had usually also been left undefined. See \cite{DuRo99} for a discussion on the implications of not defining these notions rigorously. In the context of reversible computation, intrinsic universality of reversible Turing machines has been discussed in \cite{AxGl11}.

In \cite{DeKuBl06}, a new definition of computational universality was proposed that can be applied to a wide range of discrete dynamical systems, including cellular automata, shift spaces, tag systems, and Turing machines, which can be viewed as dynamical systems in more than one way \cite{Ku97}. It is an update of the definition given in \cite{DeKuBl05}, and aims to capture the dynamical complexity of the system, so that systems that are dynamically too simple (like the identity map on a set) or allow too much freedom (like the shift map) would not be universal.

The computational universality presented in \cite{DeKuBl06} intuitively means the hardness of deciding prediction problems like `for subsets $U, V, W$ of the state space, is there a point in $U$ that is mapped by the dynamics to $V$, and stays there until it enters $W$.' For example, one would show that Turing machines are computationally universal by defining $U$ as the singleton set containing the initial configuration, $V$ as the set of all configurations, and $W$ as the set of final configurations. Of course, for the definition to be sensible, the subsets need to be restricted in some way. In symbolic systems, whose elements are infinite sequences of symbols, we require that the sets are clopen, that is, they are defined by the contents of finitely many coordinates.

One of the main observations in \cite{DeKuBl06} is that universal systems tend to be `at the edge of chaos': the dynamics appears chaotic, but has underlying structure that gives rise to the universality. They give examples of effective systems that are both universal and chaotic in the sense of Devaney \cite{De89}, but the existence of a universal chaotic cellular automaton is explicitly left open. In this article, we contruct a reversible universal chaotic cellular automaton, answering this question in the positive. Note that although reversible cellular automata were shown to be able to simulate arbitrary computation already in \cite{MoHa89}, and their construction seems to be universal in the sense of \cite{DeKuBl06}, it is not chaotic.

\section{Definitions}

Let $\M$ be either $\N$ or $\Z$, and let $S$ be a finite alphabet. The set $S^\M$, equipped with the product topology, is called the \emph{full $\M$-shift on $S$}, and its elements are called \emph{configurations}. The monoid $(\M, {+})$ acts on $S^\M$ by the \emph{shift maps} $\sigma^m : S^\M \to S^\M$ for $m \in \M$, defined by $\sigma^m(x)_n = x_{n + m}$. We denote $\sigma^1 = \sigma$. For a word $w \in S^n$ and $x \in S^\M$, we say that $w$ \emph{occurs in $x$}, denoted $w \sqsubset x$, if there exists $m \in \M$ such that $w = x_{[m, m+n-1]}$. This notation is extended to sets of configurations in the obvious way. An \emph{$\M$-shift space} is a topologically closed set $X \subset S^\M$ satisfying $\sigma(X) \subset X$. Equivalently, a shift space is defined by a set $F \subset S^*$ of \emph{forbidden words} as $\X_F = \{ x \in S^\M \;|\; \forall w \in F : w \not\sqsubset x \}$. If $F$ is finite, $\X_F$ is a \emph{shift of finite type} (SFT for short). We denote $\B_n(X) = \{ w \in S^n \;|\; w \sqsubset X \}$ and $\B(X) = \bigcup_{n \in \N} \B_n(X)$.

A \emph{block map} is a continuous function $f : X \to Y$ between shift spaces $X, Y \subset S^\M$ that satisfies $f \circ \sigma|_X = \sigma|_Y \circ f$. Alternatively, a block map is defined by a \emph{local rule} $\hat f : \B_{a+m+1}(X) \to \B_1(Y)$, where $a, m \in \N$ are the \emph{anticipation} and \emph{memory} of $\hat f$, by $f(x)_n = \hat f(x_{[n-m, n+a]})$. The interval $\{-m, \ldots, a\} \subset \M$ is called the \emph{neighborhood} of $\hat f$. In the case $\M = \N$, we must have $m = 0$. If $X = Y = S^\M$, then $f$ is called a \emph{cellular automaton} (CA for short), and a bijective CA is called \emph{reversible}, since its inverse function is also a CA. We sometimes identify a CA and its local rule, but this should always be clear from the context.

A \emph{symbolic system} is a tuple $(X, f)$, where $X$ is a compact metric space with countable clopen basis (equivalently, homeomorphic to a closed subset of a full shift), and $f : X \to X$ is continuous. The system is \emph{effective} if the clopen basis of $X$ can be enumerated so that complementation, intersection and $f$-preimage are computable operations. It is \emph{chaotic} (in the sense of Devaney \cite{De89}) if
\begin{itemize}
\item it is sensitive (there exists $\epsilon > 0$ such that for all $x \in X$ and $\delta > 0$ there exist $y \in X$ and $n \in \N$ with $d(x,y) < \delta$ and $d(f^n(x),f^n(y)) \geq \epsilon$),
\item it is transitive (for all nonempty open sets $U, V \subset X$, there exists $n \in \N$ with $U \cap f^n(V) \neq \emptyset$), and
\item the $f$-periodic points (those $x \in X$ for which $f^n(x) = x$ for some $n \in \N$) are dense in $X$.
\end{itemize}
In particular, every $\Z$-shift space $(X, \sigma)$ with the left shift is a symbolic system, as is $(X, f)$ for every block map $f : X \to X$. These are the only kinds of symbolic systems we use in this article; the full definition is given only for completeness, and to state the definitions of universality given in \cite{DeKuBl06}.

\begin{example}
For a cellular automaton $f : S^\Z \to S^\Z$, most dynamical notions have combinatorial characterizations. For example, $f$ is transitive if and only if for all words $u, v \in S^{2 \ell + 1}$ of the same odd length, there exists a configuration $x \in S^\Z$ and $n \in \N$ such that $x_{[-\ell, \ell]} = u$ and $f^n(x)_{[-\ell, \ell]} = v$.
\end{example}

A \emph{Muller automaton} is a quintuple $A = (Q, q_0, \Sigma, \delta, F)$, where $Q$ is a finite \emph{state set}, $q_0 \in Q$ an \emph{initial state}, $\Sigma$ a finite \emph{input alphabet}, $\delta : Q \times \Sigma \to Q$ a \emph{transition function} and $F \subset 2^Q$ a set of \emph{accepting subsets of states}. A Muller automaton runs deterministically on infinite words $w \in \Sigma^\N$ analogously to a standard finite automaton, and accepts if the set of states that are visited infinitely often during the computation is in the set $F$. The language accepted by $A$ is denoted $L_A$. For a language $L \subset A^*$, we denote by $L^\omega \subset A^\N$ the set of infinite concatenations of the words of $L$. In particular, if $L$ is regular, then $L^\omega$ is accepted by a Muller automaton. See \cite{Th90} for a reference on Muller automata, and other types of finite automata on infinite words.

In this article, a \emph{Turing machine} is a sextuple $M = (Q, \Sigma, q_0, q_f, B, \delta)$, where $Q$ is a finite \emph{state set}, $\Sigma$ a finite \emph{input alphabet}, $q_0, q_f \in Q$ are the \emph{initial and final states}, $B \in \Sigma$ is the \emph{blank letter} and $\delta \subset Q \times (\Sigma \times \Sigma \cup \{{/}\} \times \{+,0,-\}) \times Q$ a \emph{transition relation}. Turing machines are run on two-way infinite tapes, and the initial input is placed immediately to the right of the head. The interpretation of a quadruple $[q_1, a, b, q_2] \in \delta$ in the case $a, b \in \Sigma$ is that if $M$ is in state $q_1$ and reading the letter $a$, it may rewrite it to $b$ and go to state $q_2$. In the case $a = {/}$ and $b \in \{+,0,-\}$, if $M$ is in state $q_1$, it may go to state $q_2$ and move one step in the direction indicated by $b$. Two quadruples $[q_1, a, b, q_2]$ and $[q'_1, a', b', q'_2]$ \emph{overlap in domain} if $q_1 = q'_1$ and $a, a' \in \Sigma \Longrightarrow a = a'$. They \emph{overlap in range} if $q_2 = q'_2$ and $a, a' \in \Sigma \Longrightarrow b = b'$. If no distinct quadruples overlap in domain (range), then $M$ is \emph{deterministic (reversible, respectively)}. As usual, the language of a Turing machine is the set of input words on which is eventually halts.

We use the following terminology for certain classes in the arithmetical and analytical hierarchies. A set $N \subset \N$ is called $\Sigma^0_1$ if it is recursively enumerable, and $\Pi^0_1$ if its complement is. The set is called $\Sigma^1_1$, if there exists an oracle Turing machine $M$ such that
\[ N = \{ n \in \N \;|\; \exists f : \N \to \N : \mbox{$M$ never halts on input $n$ with oracle $f$} \}. \]
These are not the standard definitions of the classes, but characterizations whose proofs can be found, for example, in \cite[Theorem 1.3]{Sa90}. Hardness and completeness of a set with respect to these classes is defined using Turing reductions. When classifying subsets of other countable sets than $\N$, for example $\{0,1\}^*$, we assume that they are in some natural and computable bijection with $\N$.

In \cite{MoShGo89}, it was proved that deterministic reversible Turing machines are capable of simulating any deterministic Turing machine (first proved in \cite{Be73} for \emph{multi-tape} Turing machines). We will not go into the details of the notion of simulation, but it is easy to see that it implies the following lemmas.

\begin{lemma}
\label{lem:RevTM}
There exists a deterministic reversible Turing Machine $M$ whose language is $\Sigma^0_1$-complete.
\end{lemma}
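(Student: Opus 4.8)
The plan is to obtain $M$ by combining a standard $\Sigma^0_1$-complete Turing machine with the reversibilization construction of \cite{MoShGo89}. More precisely, I would first fix any deterministic Turing machine $M_0$ whose halting problem is $\Sigma^0_1$-complete; such a machine exists because the halting problem of a universal Turing machine is recursively enumerable, and every recursively enumerable set is reducible to it, so its language is $\Sigma^0_1$-complete by definition. The task is then to run $M_0$ through the simulation of \cite{MoShGo89} and check that the resulting reversible machine $M$ is both deterministic and reversible in the precise sense of our quadruple formalism, and that it halts on exactly the same inputs as $M_0$, which gives $L_M = L_{M_0}$ and hence $\Sigma^0_1$-completeness.

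The key steps, in order, are: (1) recall that in \cite{MoShGo89} a deterministic Turing machine is simulated by a deterministic \emph{reversible} one which halts precisely when the original halts, cleaning up its tape so that the simulating machine does not accidentally become irreversible through garbage on the tape; (2) translate that machine into our quadruple presentation, splitting each combined ``read-write-move'' transition into a read-write quadruple $[q_1,a,b,q_2]$ followed by a move quadruple $[q_2,{/},d,q_3]$ through an auxiliary state, so that determinism (no overlap in domain) and reversibility (no overlap in range) are preserved by construction; (3) verify that the reduction $w \mapsto w$ (or whatever trivial input encoding the simulation uses) witnesses $L_{M_0} \leq L_M$ and $L_M \leq L_{M_0}$, so that $L_M$ is $\Sigma^0_1$-complete; and (4) note that $L_M$ is recursively enumerable since one can simply simulate $M$, which places it in $\Sigma^0_1$ and completes the proof.

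The main obstacle is purely bookkeeping: checking that the standard reversibilization, which is usually stated for a machine model with combined move-and-write transitions and sometimes with multiple tapes, can be faithfully expressed with the single-tape, separated read-write-versus-move quadruple conventions fixed in the Definitions section, while keeping the no-overlap-in-range condition intact. The reason this is not a real difficulty is exactly what the excerpt anticipates: the existence of a simulation in the strong sense of \cite{MoShGo89,Be73} ``is easy to see'' to imply the lemma, since simulation preserves the halting behavior on inputs and hence transports $\Sigma^0_1$-completeness from the simulated machine to the reversible simulator. So the ``proof'' is really the observation that $\Sigma^0_1$-completeness of some deterministic machine's language, together with faithful reversible simulation, yields a reversible machine with the same language.
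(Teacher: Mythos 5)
Your proposal matches the paper's own (implicit) argument exactly: the paper simply invokes the reversibilization of \cite{MoShGo89} and notes that faithful simulation of a deterministic machine with $\Sigma^0_1$-complete language by a deterministic reversible one transfers the completeness, which is precisely your steps (1)--(4). The only difference is that you spell out the bookkeeping of re-expressing the simulator in the quadruple formalism, which the paper leaves to the reader.
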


\begin{lemma}
\label{lem:RevTMPlus}
There exists a deterministic reversible Turing Machine $M$, whose tape alphabet contains $0$, $1$ and $\#$, such that the set
\[ L = \{ w \in \{0,1\}^* \;|\; \exists u \in (0^*1)^\omega : M \mbox{~never halts on~} w \# u \} \]
is $\Sigma^1_1$-complete, and the head never steps left of the origin on right-infinite inputs.
\end{lemma}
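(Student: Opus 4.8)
The plan is to deduce the lemma from the oracle-machine characterization of $\Sigma^1_1$ recalled just above (see \cite[Theorem 1.3]{Sa90}), by turning the oracle into an auxiliary part of the input written on the tape and then reversibilizing as for Lemma~\ref{lem:RevTM}. Let $M_0$ be an oracle Turing machine for which the set $\{ n \in \N \;|\; \exists f : \N \to \N : M_0 \text{ never halts on } n \text{ with oracle } f \}$ is $\Sigma^1_1$-complete. Encode a function $f : \N \to \N$ by the infinite word $u_f = 0^{f(0)} 1\, 0^{f(1)} 1\, 0^{f(2)} 1 \cdots \in (0^*1)^\omega$; then $f \mapsto u_f$ is a bijection onto $(0^*1)^\omega$. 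Fixing the natural computable bijection $\{0,1\}^* \leftrightarrow \N$, the aim is a deterministic reversible $M$ such that $M$ never halts on $w \# u_f$ exactly when $M_0$ never halts on the number coded by $w$ with oracle $f$. Granting this, $L$ is the image of the complete set under the bijection; and since membership in $L$ has the form ``$\exists u \in (0^*1)^\omega$: $M$ never halts on $w \# u$'', which is $\Sigma^1_1$ (the inner condition is co-r.e.\ in $u$, since each configuration of $M$ depends on only a finite prefix of $u$), it follows that $L$ is $\Sigma^1_1$-complete.

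First I would build an ordinary deterministic one-tape machine $M'$ with this behaviour. Using a multi-track tape, $M'$ keeps the original input $w \# u$ on a read-only track and devotes another track to a simulation of $M_0$'s (one-way) work tape on input $w$; a further track carries a pointer marking the first not-yet-read block of $u$, and yet another records the table of oracle values extracted so far. When the simulation of $M_0$ issues a query for $f(k)$, the machine $M'$ returns the value from the table if it is present, and otherwise walks the pointer forward along the read-only track, peeling off the successive blocks $0^{f(j)}1$ for indices $j$ up to $k$ and recording each length. Thus $u$ is read monotonically from left to right and is never overwritten, and all of $M'$'s activity takes place weakly to the right of its initial cell; planting a sentinel there on the first step makes it so that the head never moves left of the origin on any right-infinite input. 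Since $M'$ simulates $M_0$ step for step, it halts on $w \# u$ precisely when $M_0$ halts on $w$ with the oracle coded by $u$, whenever $u \in (0^*1)^\omega$.

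Finally I would apply the reversible simulation of \cite{MoShGo89} (essentially Bennett's scheme \cite{Be73}: run $M'$ forward while logging each transition on auxiliary tracks, copy the output upon reaching the halting state, then run the computation backward erasing the log) to obtain a deterministic reversible $M$ with the same halting set as $M'$ and tape alphabet still containing $0$, $1$ and $\#$. The log lies on extra tracks over cells already visited in the forward phase, and the backward sweep only retraces those cells, so $M$ still never steps left of the origin on right-infinite inputs. This $M$ has precisely the behaviour demanded in the first paragraph, so $L$ is $\Sigma^1_1$-complete as claimed. The main obstacle is not reversibility — which is essentially free from the cited construction once a suitable deterministic machine is in hand — but the design of the oracle-reading gadget: making the consumption of $u$ monotone, faithful to $M_0$, and confined to the right half-tape; one also has to check, as above, that the backward phase of the reversibilization respects the head-position constraint.
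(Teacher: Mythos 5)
Your proposal is correct and follows essentially the route the paper intends: the paper gives no proof of this lemma at all, asserting only that it follows from the reversible simulation of \cite{MoShGo89} applied to the oracle-machine characterization of $\Sigma^1_1$ stated just before it, and that is precisely the argument you carry out (encode $f$ as $u_f \in (0^*1)^\omega$, build a deterministic machine reading the oracle monotonically from the tail, then reversibilize). The one imprecision --- a Bennett-style history has length proportional to the running time, so it cannot in general fit on tracks over only the \emph{visited} cells and must be allowed to extend further to the right --- is harmless here, since growing rightward preserves the only positional constraint, namely that the head never steps left of the origin.
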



\section{Traces and Computational Universality}

In this section, we recall the definition of computational universality for an effective symbolic system $(X,f)$, as given in \cite{DeKuBl06}. First, a \emph{clopen partition} of $X$ is a finite collection $\mathcal{C} = (C_s)_{s \in \Sigma}$ of mutually disjoint clopen subsets of $X$ such that $X = \bigcup_{s \in \Sigma} C_s$, labeled by a finite set $\Sigma$. The partition can be seen as an observation or experiment, with input $x \in X$ resulting in the unique label $\pi_{\mathcal{C}}(x) = s_0 \in \Sigma$ such that $x \in C_{s_0}$. More information can be extracted from $x$ if we apply the dynamics function $f$ to it and repeat the experiment, obtaining the result $\pi_{\mathcal{C}}(f(x)) \in \Sigma$. Iterating the idea leads to the following definition.



\begin{definition}
Let $(X,f)$ be a symbolic system, and let $\mathcal{C} = (C_s)_{s \in \Sigma}$ be a clopen partition of $X$. For $x \in X$, the \emph{$f$-itinerary of $x$ via $\mathcal{C}$} is the infinite sequence $\pi^f_{\mathcal{C}}(x) \in \Sigma^\N$ defined by $\pi^f_{\mathcal{C}}(x)_n = \pi_{\mathcal{C}}(f^n(x))$ for all $n \in \N$. The \emph{$\mathcal{C}$-trace shift of $f$} is the $\N$-shift space $\tau_{f, \mathcal{C}} = \{ \pi^f_{\mathcal{C}}(x) \;|\; x \in X \}$. In the case $X \subset S^\Z$, we denote by $\tau_{f,n}$ the trace with respect to the partition $\mathcal{C}_n = (C_w)_{w \in S^{2n+1}}$, where $C_w = \{ x \in X \;|\; x_{[-n, n]} = w \}$ for all $w \in S^{2n+1}$.
\end{definition}


\begin{example}
\label{ex:CATrace}
Let $f : S^\Z \to S^\Z$ be a cellular automaton, and let $n \in \N$. Then the trace shift $\tau_{f,n}$ is obtained by taking, for each $x \in S^\Z$, the sequence
\[ x_{[-n,n]}, f(x)_{[-n,n]}, f^2(x)_{[-n,n]}, f^3(x)_{[-n,n]}, \ldots \]
of the central words occurring in the evolution of the initial state $x$ under $f$.
\end{example}

The article \cite{DeKuBl06} defines the following two decision problems.

\begin{definition}
\label{def:Univ}
Let $(X, f)$ be an effective symbolic system. The \emph{infinite time prediction problem} asks whether we have $\tau_{f,\mathcal{C}} \cap L_A \neq \emptyset$ for a given partition $\mathcal{C} = (C_s)_{s \in \Sigma}$ and Muller automaton $A$ on the alphabet $\Sigma$. The \emph{finite time prediction problem} asks whether we have $\B(\tau_{f,\mathcal{C}}) \cap L \neq \emptyset$ for a given partition $\mathcal{C} = (C_s)_{s \in \Sigma}$ and regular language $L \subset \Sigma^*$. We say that $(X,f)$ is \emph{computationally universal} if its finite time prediction problem is $\Sigma^0_1$-complete.
\end{definition}

As hinted in Section~\ref{sec:Intro}, the most obvious way of showing that an effective symbolic system $(X, f)$ is computationally universal in the above sense is to construct a simulation of a Turing machine $M$ (or some other universal computational device) by $f$, and identify three disjoint clopen sets $C_0, C_1, C_2 \subset X$ that correspond to the classes of initial, intermediate, and halting configurations of $M$. Then the instance $0 1^* 2$ of the finite time prediction problem is positive if and only if $M$ halts on one of the the initial configurations in $C_0$.

\begin{example}
We continue Example~\ref{ex:CATrace}. In the finite time prediction problem for $(S^\Z, f)$, we are given a clopen partition of $S^\Z$, which we (for now) assume to be $\mathcal{C}_n$ for some $n \in \N$, and a regular language $L$ over the alphabet $S^{2n+1}$. For example, if $S = \{0, 1\}$ and $n = 1$, then $L$ may be given as the regular expression $([000][010])^*[111] + [100]^*[111]$ (note that the `letters' of this regular expression are binary words of length $3$). If there exists $x \in \{0, 1\}^\Z$ such that, for example, $x_{[-1, 1]} = 000$, $f(x)_{[-1,1]} = 010$ and $f^2(x)_{[-1,1]} = 111$, then the answer to the finite time prediction problem with these inputs is `yes', since the language of the $\mathcal{C}_1$-trace shift of $f$ contains the word $[000][010][111] \in L$.
\end{example}

Examples of chaotic universal effective symbolic systems and universal cellular automata were provided in \cite{DeKuBl06}, but it was explicitly left open whether a universal cellular automaton can be chaotic.

The notion of universality given in the earlier work \cite{DeKuBl05} is also equivalent to the $\Sigma^0_1$-completeness of a prediction problem, but instead of Muller automata, the definition uses a temporal logic that specifies subsets of the state space. We will not digress into this subject, as it is not necessary for stating and proving the main results of this article.

\section{Main Results}

In this section, we prove that a chaotic reversible cellular automaton can be computationally universal and even have a maximally hard infinite time prediction problem. In the proof, we use the following well-known lemma, found explicitly in \cite{Lu09}.

\begin{lemma}
\label{lem:Chaotic}
A reversible cellular automaton is chaotic (in the sense of Devaney) if and only if it is transitive.
\end{lemma}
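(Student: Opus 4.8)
The plan is to establish the two implications separately. For the easy direction, suppose $f : S^\Z \to S^\Z$ is a chaotic reversible CA; then by definition it is transitive, so there is nothing to prove. (In fact the interesting content of the lemma is that for reversible CA, transitivity alone forces the other two Devaney conditions.) So the real work is the converse: assuming $f$ is a transitive reversible CA, I must show that $f$ is sensitive and that its periodic points are dense.

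For \emph{sensitivity}, I would use the standard fact that a transitive dynamical system on a perfect metric space that is not minimal is sensitive — or, more directly, I would exploit reversibility. Since $f$ is reversible, $f^{-1}$ is also a CA, hence also uniformly continuous and in fact also transitive (transitivity of $f$ on $S^\Z$ passes to $f^{-1}$ because the phase space is compact and $f$ is a homeomorphism). A transitive homeomorphism of a perfect compact metric space is sensitive: one picks two points in distinct elements of some clopen partition into cells determined by one coordinate, uses transitivity to push a small ball onto a neighborhood of the first and another onto a neighborhood of the second, and concludes that no $\delta$-ball can stay $\epsilon$-close to its image orbit. Concretely, with $\epsilon$ chosen as the distance between configurations differing at coordinate $0$, one argues that if $d(f^n(x),f^n(y)) < \epsilon$ for all $n$ then $x$ and $y$ have the same $f$-itinerary via the partition by the symbol at the origin, and transitivity (together with the shift commuting with $f$) rules this out for points arbitrarily close to a given $x$.

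For \emph{density of periodic points}, I would invoke the theorem of Boyle--Kitchens (also attributed in this setting to the folklore used in \cite{Lu09}) that every transitive reversible (equivalently, surjective and open, or bijective) CA has dense periodic points; the proof idea is that a transitive bijective CA, viewed as an automorphism of the full shift, has a dense set of points lying on finite orbits because one can use transitivity to find, for any cylinder $[w]$, a configuration whose orbit returns to $[w]$, then a compactness/pigeonhole argument over the finitely many possible contents of a growing window combined with reversibility (so orbits cannot merge) yields a genuinely periodic point agreeing with $w$ on its central coordinates. I expect the main obstacle to be giving a self-contained proof of this density statement: it is the only part that genuinely uses reversibility in an essential way (a transitive non-reversible CA need not have dense periodic points), and the cleanest route is simply to cite the known result as the lemma statement already does (``well-known lemma, found explicitly in \cite{Lu09}''), sketch the pigeonhole-plus-injectivity argument, and move on, since the paper only needs the lemma as a black box for transferring the transitivity it will construct into full Devaney chaos.
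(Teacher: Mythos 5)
Your decomposition is the right one, and in the end you land where the paper does: the paper gives no proof of Lemma~\ref{lem:Chaotic} at all, it simply cites \cite{Lu09} as a well-known fact, so "state the two missing Devaney conditions and defer to the literature" is exactly the paper's route. The only direction with content is that transitivity of a reversible CA forces sensitivity and dense periodic points, which you correctly identify.

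That said, the one step you sketch in detail is the one that does not work as written, and it is also the one with a much easier self-contained proof. Density of periodic points for a reversible CA $f$ needs neither transitivity nor Boyle--Kitchens: for each $p \geq 1$ the set $X_p$ of spatially $p$-periodic configurations (those fixed by $\sigma^p$) is finite and satisfies $f(X_p) \subseteq X_p$ because $f$ commutes with the shift; injectivity of $f$ then makes $f|_{X_p}$ a permutation of a finite set, so every point of $X_p$ is $f$-periodic, and $\bigcup_p X_p$ is dense in $S^\Z$. By contrast, your sketch --- use transitivity to return the orbit to a cylinder $[w]$, then pigeonhole over window contents plus injectivity --- does not by itself produce a temporally periodic point: recurrence to $[w]$ and agreement of $f^i(x)$ and $f^j(x)$ on a finite window do not close up into $f^{i}(x)=f^{j}(x)$ without a further limiting argument, so as stated this is a gap. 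Once dense periodic points are in hand, your separate sensitivity argument is also unnecessary: the theorem of Banks, Brooks, Cairns, Davis and Stacey gives sensitivity from transitivity plus dense periodic points on any infinite metric space (alternatively, Codenotti and Margara prove that transitive cellular automata are sensitive). Finally, be careful with the parenthetical claim that a transitive non-reversible CA need not have dense periodic points: whether every surjective (in particular every transitive) CA has dense periodic points is a well-known open problem, so no such counterexample is available.
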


We are now ready to state and prove our main theorem.

\begin{theorem}
\label{thm:Finite}
There exists a chaotic reversible cellular automaton whose finite-time prediction problem is $\Sigma^0_1$-complete, even when restricted to the radius-one partition $\mathcal{C}_1$. In particular, the automaton is computationally universal in the sense of Definition~\ref{def:Univ}.
\end{theorem}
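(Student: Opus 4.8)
The plan is to build the automaton in two layers that run in parallel on a product alphabet, so that one layer carries out a reversible Turing machine computation and the other layer supplies the chaos. For the computational layer, I would take the reversible Turing machine $M$ from Lemma~\ref{lem:RevTM} whose language is $\Sigma^0_1$-complete, and encode its instantaneous descriptions as a reversible partial CA in the usual Morita--Harao style: a cell records a tape symbol, together with an optional head-with-state marker, and the local rule implements $\delta$. Because $M$ is reversible, this CA is injective on the subset of well-formed configurations; to make it a genuine reversible CA on the whole full shift I would use the standard device of extending the transition table to a permutation (e.g.\ making the rule act as identity on malformed or ``garbage'' local patterns, then closing it up to a bijection, possibly after passing to a larger radius-$1$ neighbourhood by blocking). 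The three clopen sets $C_0,C_1,C_2$ of Definition~\ref{def:Univ} then pick out configurations whose central cell sees, respectively, the head in the initial state with a valid input encoding, the head in any non-final state, and the head in the final state; the regular language $01^*2$ applied to the trace $\tau_{f,1}$ is nonempty iff $M$ halts on some encoded input, giving $\Sigma^0_1$-hardness, while $\Sigma^0_1$-membership of the finite-time prediction problem is automatic for any effective system.

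The difficulty is that a CA simulating a Turing machine computation is very far from transitive, hence by Lemma~\ref{lem:Chaotic} very far from chaotic, and naively adding randomness would destroy the computation. The trick I would use is to run the TM computation only on a sparse, shift-coordinated ``track'' while letting the rest of the configuration be governed by a manifestly transitive reversible CA, and to arrange the two to not interfere. Concretely, I would take a second layer over an alphabet containing a blank symbol together with some ``active'' symbols, on which the CA acts as a transitive reversible map — a good candidate is (a reversible, mixing variant of) the XOR/addition CA, or more simply a partitioned CA that shifts and permutes, anything known to be transitive on its full shift. The computational layer is allowed to advance only at cells where the chaos layer is blank; since the chaos layer is transitive, one can steer any local pattern to blank in finitely many steps, but one can also steer it to \emph{stay} blank long enough for the TM head to do whatever finite amount of work the prediction instance asks about. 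This is exactly the ``edge of chaos'' phenomenon the introduction advertises: globally the dynamics looks chaotic, but there is a hidden track on which a Turing machine is faithfully simulated.

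The steps, in order, would be: (1) fix $M$ from Lemma~\ref{lem:RevTM} and build its reversible partial-CA simulation, recording what ``initial/intermediate/halting'' mean locally; (2) fix a reversible transitive CA $g$ for the chaos layer and record the key combinatorial fact (cf.\ the transitivity characterisation in the Example after the definition of chaos) that any central word can be steered to any other, and in particular to a word that is blank on a prescribed window for a prescribed number of steps; (3) define $f$ on the product alphabet so that the chaos layer evolves under $g$ independently, while the TM layer takes a step only where the chaos layer is currently blank, and check that $f$ is a well-defined reversible CA (here is where one verifies injectivity of the combined local rule and, if needed, rescales to radius $1$ by a standard blocking argument, noting that $\mathcal{C}_1$ in the rescaled alphabet is still a legitimate clopen partition); (4) prove $f$ is transitive, hence chaotic by Lemma~\ref{lem:Chaotic}: given two central words $u,v$, first run $g$ on the chaos layer to reconcile that layer, and because the TM layer can be left untouched (or garbage-collected) while we do so, we can independently arrange the TM layer to match; (5) prove the reduction: construct the partition $(C_0,C_1,C_2)$ and verify $01^*2 \cap \mathcal{B}(\tau_{f,1}) \neq \emptyset$ iff $M$ accepts some input, so the finite-time prediction problem is $\Sigma^0_1$-hard, and observe membership in $\Sigma^0_1$.

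I expect the main obstacle to be step (3)–(4): making the coupling between the two layers local, reversible, \emph{and} compatible with transitivity. The tension is that to keep transitivity we must be able to freely rewrite the chaos layer, but the TM layer's legal evolution depends on the chaos layer, so a careless coupling makes $f$ non-injective or makes the TM track impossible to ``repair'' during a transitivity argument. The resolution I would pursue is to have the TM step be \emph{conditioned} on the chaos layer rather than \emph{feeding back into} it (the chaos layer never reads the TM layer), so that $g$ remains an autonomous reversible factor; reversibility of the whole then reduces to reversibility of ``conditioned TM step'' for each fixed chaos-layer history, which holds because $M$ is reversible and the condition is determined by the (independently invertible) chaos layer. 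The transitivity argument then only needs that the set of chaos-layer orbits reaching ``blank on a given window for $k$ steps'' is dense, which follows from transitivity of $g$ plus a compactness/finiteness argument.
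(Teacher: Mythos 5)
Your high-level plan (reversible TM simulation plus an added mechanism for transitivity) points in the right direction, but the specific coupling you propose breaks down at exactly the points you flag as delicate, and the gaps are not repairable within your scheme. First, transitivity of the skew product $(c,t)\mapsto(g(c),F_c(t))$ does not follow from transitivity of the base $g$: even when the TM layer is fully ``active,'' a TM-simulating CA is extremely non-transitive (a central window containing no head can never acquire one, since reversibility forbids creating or destroying heads), so no amount of steering the chaos layer lets you ``independently arrange the TM layer to match'' an arbitrary target word $v$. The paper resolves this not with a second chaotic layer but by composing with the shift, $h=\sigma\circ g^2$: the segment boundaries and their contents drift across the observation window, and since segments never communicate, arbitrary new content can be spliced into the segments that will drift in — that is the entire transitivity argument, and your construction has no analogue of it. Second, the gated TM step is not obviously reversible: a head-moving transition involves two cells, and applying it only where an autonomous layer is blank does not yield a bijection by a cell-wise permutation argument; the paper instead makes the simulation total and reversible via the segment structure together with the time-reversal trick (switching $q\leftrightarrow\tilde q$ and running $M$ backwards when no quadruple applies), which you replace with an unjustified ``close it up to a bijection.''

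Third, your reduction does not work for the radius-one partition $\mathcal{C}_1$. A single question ``does $M$ halt on some input'' is one $\Sigma^0_1$ instance, not a reduction; for hardness you need, for each $w$, a regular language $L(w)$ whose intersection with $\mathcal{B}(\tau_{h,1})$ is nonempty iff $M$ halts on $w$. With a $3$-cell window and no drift you can neither verify that the input adjacent to the head is exactly $w$ (it is arbitrarily long) nor detect a halting event that occurs far from the origin. The paper handles both with the same shift: the input word passes through the drifting window letter by letter, and halting is announced by a right-moving particle that eventually crosses the window, with the side condition ``no left-moving particle was ever seen'' certifying that the particle was created by the halting event rather than reflected off the segment wall. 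These three ingredients — segments with direction reversal, composition with the shift, and the particle signal — are the substance of the proof, and none of them is present in your proposal.
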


\begin{proof}
We first describe the general idea of the construction. The configurations of the reversible CA we construct are divided into `compartments', each of which may contain one read-write head of a reversible Turing machine. The automaton simulates the Turing machines separately in each compartment, changing the direction of the simulation if they halt, and no information can be passed between the compartments. The compartments and the machines are also constantly shifted to the left. When one of the machines halts, it sends a signal to the right.

Now, in the trace shift we wish to see the following pattern: the left wall of a compartment, then some empty space, then the Turing machine head in its initial state followed by an input word, then another empty stretch, and finally the right-moving signal emitted by the halting machine. If we see such a pattern, the machine must have halted, since the signal cannot have come from the other side of the wall, and it cannot be the result of a left-moving signal bouncing off the wall, for no such signal was seen earlier. Finally, the transitivity of the CA follows from the constant shifting of the compartments and the fact that they never communicate, so that every central pattern of a configuration can eventually be replaced by arbitrary data.

Let $M = (\Sigma, Q, \delta, B, q_0, q_f)$ be the reversible Turing Machine of Lemma~\ref{lem:RevTM}. We may assume that the head of $M$ makes a move only on every third step, that it does not make a move in state $q_0$ for two steps when run backwards or forwards, that it always takes at least $2|w|+2$ steps for $M$ to halt on an input word $w$, and that it always halts after an even number of steps or runs forever. Denote $S = \Sigma \times (Q \cup \tilde Q \cup \{{\leftarrow}, {\rightarrow}\})$, where $\tilde Q = \{\tilde q \;|\; q \in Q\}$ is a disjoint copy of $Q$, and define a reversible cellular automaton $f_M$ on $S^\Z$ as follows.

We partition each configuration $x \in S^\Z$ into segments whose second track is of the form ${\leftarrow}^m q {\rightarrow}^n$, where $q \in Q \cup \tilde Q$, or ${\leftarrow}^{m+1} {\rightarrow}^{n+1}$, for some (possibly infinite) $m, n \geq 0$. Namely, each cell of $x$ is contained in at least one pattern of this form, and when we take the maximal ones, the partition is uniquely determined. On these segments, $f_M$ simulates a computation of $M$ (backwards in time if $q \in \tilde Q$) in a standard way. Namely, consider a cell in state $(a, q) \in \Sigma \times Q$. If $[q, a, b, r] \in \delta$ for some $b \in \Sigma$ and $r \in Q$, the cell will update to $(b, r)$. Each two-cell pattern $(a, q)(b, {\rightarrow})$ such that $[q, /, +, r] \in \delta$ for some $b \in \Sigma$ will update to $(a, {\leftarrow})(b, r)$, and analogously for a $0$- or $-$-move. In all other cases (no applicable quadruple exists, or the segment ends), the cell becomes $(a, \tilde q)$. To such cells, the quadruples and the time-reversal rule are applied in the reverse direction: $[r, b, a, q] \in \delta$ results in $(b, \tilde r)$ and so on. All cells not mentioned here retain their state. Thus the first track acts as the tape, the endpoints of the segments never move, and if the simulation cannot be carried on then it changes direction. For $x \in S^\Z$, denote by $r(x) \in S^\Z$ the configuration obtained from $x$ by changing every $q \in Q$ to $\tilde q$, and vice versa; it is easy to see that $r \circ f_M \circ r = f_M^{-1}$, which implies that $f_M$ is reversible. See Figure~\ref{fig:RevTM} for a visualization of the dynamics of $f_M$.

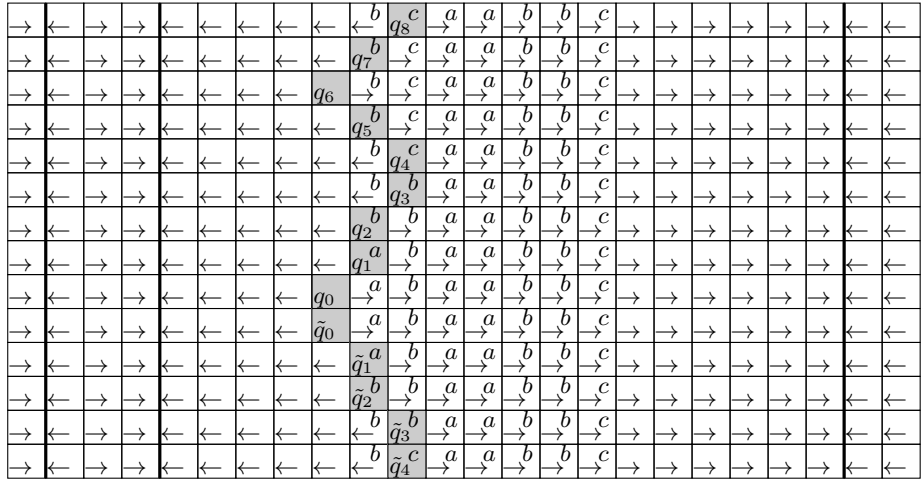
\begin{figure}[p]
\begin{center}
\begin{tikzpicture}[xscale=0.5,yscale=0.45]

\foreach \x/\y in {1/0,1/1,1/2,1/3,1/4,1/5,1/6,1/7,1/8,1/9,1/10,1/11,1/12,1/13,4/0,4/1,4/2,4/3,4/4,4/5,4/6,4/7,4/8,4/9,4/10,4/11,4/12,4/13,22/0,22/1,22/2,22/3,22/4,22/5,22/6,22/7,22/8,22/9,22/10,22/11,22/12,22/13}{
\begin{scope}[shift={(\x,\y)}]
\draw (0,0) rectangle (1,1);
\draw[very thick] (0,0) -- (0,1);
\node () at (1/3,1/4) {$\leftarrow$};

\end{scope}
}
\foreach \x/\y in {8/5}{
\begin{scope}[shift={(\x,\y)}]
\draw[fill=black!20] (0,0) rectangle (1,1);
\node () at (1/3,1/4) {$q_0$};

\end{scope}
}
\foreach \x/\y in {9/7}{
\begin{scope}[shift={(\x,\y)}]
\draw[fill=black!20] (0,0) rectangle (1,1);
\node () at (1/3,1/4) {$q_2$};
\node () at (2/3,3/4) {$b$};

\end{scope}
}
\foreach \x/\y in {9/6}{
\begin{scope}[shift={(\x,\y)}]
\draw[fill=black!20] (0,0) rectangle (1,1);
\node () at (1/3,1/4) {$q_1$};
\node () at (2/3,2/3) {$a$};

\end{scope}
}
\foreach \x/\y in {10/9}{
\begin{scope}[shift={(\x,\y)}]
\draw[fill=black!20] (0,0) rectangle (1,1);
\node () at (1/3,1/4) {$q_4$};
\node () at (2/3,2/3) {$c$};

\end{scope}
}
\foreach \x/\y in {10/8}{
\begin{scope}[shift={(\x,\y)}]
\draw[fill=black!20] (0,0) rectangle (1,1);
\node () at (1/3,1/4) {$q_3$};
\node () at (2/3,3/4) {$b$};

\end{scope}
}
\foreach \x/\y in {8/11}{
\begin{scope}[shift={(\x,\y)}]
\draw[fill=black!20] (0,0) rectangle (1,1);
\node () at (1/3,1/4) {$q_6$};

\end{scope}
}
\foreach \x/\y in {9/10}{
\begin{scope}[shift={(\x,\y)}]
\draw[fill=black!20] (0,0) rectangle (1,1);
\node () at (1/3,1/4) {$q_5$};
\node () at (2/3,3/4) {$b$};

\end{scope}
}
\foreach \x/\y in {10/13}{
\begin{scope}[shift={(\x,\y)}]
\draw[fill=black!20] (0,0) rectangle (1,1);
\node () at (1/3,1/4) {$q_8$};
\node () at (2/3,2/3) {$c$};

\end{scope}
}
\foreach \x/\y in {9/12}{
\begin{scope}[shift={(\x,\y)}]
\draw[fill=black!20] (0,0) rectangle (1,1);
\node () at (1/3,1/4) {$q_7$};
\node () at (2/3,3/4) {$b$};

\end{scope}
}
\foreach \x/\y in {5/0,5/1,5/2,5/3,5/4,5/5,5/6,5/7,5/8,5/9,5/10,5/11,5/12,5/13,6/0,6/1,6/2,6/3,6/4,6/5,6/6,6/7,6/8,6/9,6/10,6/11,6/12,6/13,7/0,7/1,7/2,7/3,7/4,7/5,7/6,7/7,7/8,7/9,7/10,7/11,7/12,7/13,8/0,8/1,8/2,8/3,8/6,8/7,8/8,8/9,8/10,8/12,8/13,23/0,23/1,23/2,23/3,23/4,23/5,23/6,23/7,23/8,23/9,23/10,23/11,23/12,23/13}{
\begin{scope}[shift={(\x,\y)}]
\draw (0,0) rectangle (1,1);
\node () at (1/3,1/4) {$\leftarrow$};

\end{scope}
}
\foreach \x/\y in {0/0,0/1,0/2,0/3,0/4,0/5,0/6,0/7,0/8,0/9,0/10,0/11,0/12,0/13,2/0,2/1,2/2,2/3,2/4,2/5,2/6,2/7,2/8,2/9,2/10,2/11,2/12,2/13,3/0,3/1,3/2,3/3,3/4,3/5,3/6,3/7,3/8,3/9,3/10,3/11,3/12,3/13,16/0,16/1,16/2,16/3,16/4,16/5,16/6,16/7,16/8,16/9,16/10,16/11,16/12,16/13,17/0,17/1,17/2,17/3,17/4,17/5,17/6,17/7,17/8,17/9,17/10,17/11,17/12,17/13,18/0,18/1,18/2,18/3,18/4,18/5,18/6,18/7,18/8,18/9,18/10,18/11,18/12,18/13,19/0,19/1,19/2,19/3,19/4,19/5,19/6,19/7,19/8,19/9,19/10,19/11,19/12,19/13,20/0,20/1,20/2,20/3,20/4,20/5,20/6,20/7,20/8,20/9,20/10,20/11,20/12,20/13,21/0,21/1,21/2,21/3,21/4,21/5,21/6,21/7,21/8,21/9,21/10,21/11,21/12,21/13}{
\begin{scope}[shift={(\x,\y)}]
\draw (0,0) rectangle (1,1);
\node () at (1/3,1/4) {$\rightarrow$};

\end{scope}
}
\foreach \x/\y in {}{
\begin{scope}[shift={(\x,\y)}]
\draw (0,0) rectangle (1,1);
\node () at (1/3,1/4) {$\leftarrow$};
\node () at (2/3,2/3) {$a$};

\end{scope}
}
\foreach \x/\y in {}{
\begin{scope}[shift={(\x,\y)}]
\draw (0,0) rectangle (1,1);
\node () at (1/3,1/4) {$\leftarrow$};
\node () at (2/3,2/3) {$c$};

\end{scope}
}
\foreach \x/\y in {9/0,9/1,9/8,9/9,9/13}{
\begin{scope}[shift={(\x,\y)}]
\draw (0,0) rectangle (1,1);
\node () at (1/3,1/4) {$\leftarrow$};
\node () at (2/3,3/4) {$b$};

\end{scope}
}
\foreach \x/\y in {9/4,9/5,11/0,11/1,11/2,11/3,11/4,11/5,11/6,11/7,11/8,11/9,11/10,11/11,11/12,11/13,12/0,12/1,12/2,12/3,12/4,12/5,12/6,12/7,12/8,12/9,12/10,12/11,12/12,12/13}{
\begin{scope}[shift={(\x,\y)}]
\draw (0,0) rectangle (1,1);
\node () at (1/3,1/4) {$\rightarrow$};
\node () at (2/3,2/3) {$a$};

\end{scope}
}
\foreach \x/\y in {10/10,10/11,10/12,15/0,15/1,15/2,15/3,15/4,15/5,15/6,15/7,15/8,15/9,15/10,15/11,15/12,15/13}{
\begin{scope}[shift={(\x,\y)}]
\draw (0,0) rectangle (1,1);
\node () at (1/3,1/4) {$\rightarrow$};
\node () at (2/3,2/3) {$c$};

\end{scope}
}
\foreach \x/\y in {9/11,10/2,10/3,10/4,10/5,10/6,10/7,13/0,13/1,13/2,13/3,13/4,13/5,13/6,13/7,13/8,13/9,13/10,13/11,13/12,13/13,14/0,14/1,14/2,14/3,14/4,14/5,14/6,14/7,14/8,14/9,14/10,14/11,14/12,14/13}{
\begin{scope}[shift={(\x,\y)}]
\draw (0,0) rectangle (1,1);
\node () at (1/3,1/4) {$\rightarrow$};
\node () at (2/3,3/4) {$b$};

\end{scope}
}
\foreach \x/\y in {9/2}{
\begin{scope}[shift={(\x,\y)}]
\draw[fill=black!20] (0,0) rectangle (1,1);
\node () at (1/3,1/3) {$\tilde q_2$};
\node () at (2/3,3/4) {$b$};

\end{scope}
}
\foreach \x/\y in {8/4}{
\begin{scope}[shift={(\x,\y)}]
\draw[fill=black!20] (0,0) rectangle (1,1);
\node () at (1/3,1/3) {$\tilde q_0$};

\end{scope}
}
\foreach \x/\y in {10/1}{
\begin{scope}[shift={(\x,\y)}]
\draw[fill=black!20] (0,0) rectangle (1,1);
\node () at (1/3,1/3) {$\tilde q_3$};
\node () at (2/3,3/4) {$b$};

\end{scope}
}
\foreach \x/\y in {}{
\begin{scope}[shift={(\x,\y)}]
\draw[fill=black!20] (0,0) rectangle (1,1);
\node () at (1/3,1/3) {$\tilde q_6$};

\end{scope}
}
\foreach \x/\y in {10/0}{
\begin{scope}[shift={(\x,\y)}]
\draw[fill=black!20] (0,0) rectangle (1,1);
\node () at (1/3,1/3) {$\tilde q_4$};
\node () at (2/3,2/3) {$c$};

\end{scope}
}
\foreach \x/\y in {9/3}{
\begin{scope}[shift={(\x,\y)}]
\draw[fill=black!20] (0,0) rectangle (1,1);
\node () at (1/3,1/3) {$\tilde q_1$};
\node () at (2/3,2/3) {$a$};

\end{scope}
}
\foreach \x/\y in {}{
\begin{scope}[shift={(\x,\y)}]
\draw[fill=black!20] (0,0) rectangle (1,1);
\node () at (1/3,1/3) {$\tilde q_5$};
\node () at (2/3,3/4) {$b$};

\end{scope}
}

\end{tikzpicture}
\end{center}
\caption{A spacetime diagram of the reversible cellular automaton $f_M$ simulating the reversible Turing machine $M$. Each row is (the central pattern of) a configuration of $S^\Z$, and time increases upwards. The shaded cells contain a Turing machine head, and the thick vertical lines mark the borders of segments. The initial state of $M$ is $q_0$, which is preceded in the simulation by the `backward' state $\tilde q_0$, and $a, b, c$ are elements of the tape alphabet. To save space, $M$ does not move only every third step in this figure.}
\label{fig:RevTM}
\end{figure}

Now, let $D = \{ \emptycell{}, \leftcell{}, \rightcell{}, \bothcell{} \}$, and denote $R = S \times D$. The cells in $D$ represent particles traveling to the left or to the right, with the fourth one containing one of each. We define a CA $g$ on $R^\Z$ that functions as follows:
\begin{enumerate}
\item Apply $f_M$ to the first track.
\item Shift each particle to its direction, unless it would cross the barrier between two segments, in which case change its direction.
\item If a cell contains the final state of $M$ in its first track, apply the bijection $\emptycell{} \leftrightarrow \rightcell{}, \leftcell{} \leftrightarrow \bothcell{}$ to the second track.
\end{enumerate}
See Figure~\ref{fig:Particles} for a visualization. Since each of the three steps is clearly reversible, so is $g$. Finally, define $h = \sigma \circ g^2$, which is likewise reversible, and denote by $\pi_S : R \to S$ and $\pi_D : R \to D$ the obvious projections from $R$.

\begin{figure}[p]
\begin{center}
\begin{tikzpicture}[xscale=0.5,yscale=0.25]

\foreach \x/\y in {5/6,8/10}{
\begin{scope}[shift={(\x,\y)}]
\draw[thick,gray] (.5,1) ellipse (.4 and .8);

\end{scope}
}
\foreach \x/\y in {0/0,0/2,0/4,0/6,0/8,0/10,0/12,2/0,2/2,2/4,2/6,2/8,2/10,2/12,4/0,4/2,4/4,4/6,4/8,4/10,4/12,5/0,5/2,5/4,5/8,5/10,5/12,6/0,6/2,6/4,6/6,6/8,6/10,6/12,8/0,8/2,8/4,8/6,8/8,8/12,9/0,9/2,9/4,9/6,9/8,9/10,9/12,10/0,10/2,10/4,10/6,10/8,10/10,10/12,11/0,11/2,11/4,11/6,11/8,11/10,11/12,12/0,12/2,12/4,12/6,12/8,12/10,12/12,13/0,13/2,13/4,13/6,13/8,13/10,13/12,14/0,14/2,14/4,14/6,14/8,14/10,14/12,16/0,16/2,16/4,16/6,16/8,16/10,16/12}{
\begin{scope}[shift={(\x,\y)}]

\end{scope}
}
\foreach \x/\y in {1/1,1/9,2/3,2/11,4/1,5/1,5/3,6/3,6/9,7/9,9/1,10/3,11/3,11/5,12/1,12/7,13/7,14/5,14/9,15/9,16/11}{
\begin{scope}[shift={(\x,\y)}]
\draw[thick,gray] (0,-1) -- (1,1);
\draw (0,-1) rectangle (1,1);

\end{scope}
}
\foreach \x/\y in {1/0,1/2,1/4,1/6,1/8,1/10,1/12,3/0,3/2,3/4,3/6,3/8,3/10,3/12,7/0,7/2,7/4,7/6,7/8,7/10,7/12,15/0,15/2,15/4,15/6,15/8,15/10,15/12}{
\begin{scope}[shift={(\x,\y)}]
\draw[very thick] (0,0) -- (0,2);

\end{scope}
}
\foreach \x/\y in {3/13,5/7,6/5,10/1,12/5,13/3,13/9,14/11}{
\begin{scope}[shift={(\x,\y)}]
\draw[thick,gray] (0,-1) -- (1,1);
\draw[thick,gray] (1,-1) -- (0,1);
\draw (0,-1) rectangle (1,1);

\end{scope}
}
\foreach \x/\y in {0/1,0/3,0/5,0/7,0/9,0/11,0/13,1/3,1/5,1/11,1/13,2/1,2/7,2/9,3/1,3/3,3/5,3/7,3/9,4/3,4/5,4/7,4/13,5/5,5/11,6/1,6/13,7/1,7/3,7/5,7/11,7/13,8/1,8/3,8/7,8/9,8/11,9/5,9/7,9/9,9/13,10/5,10/7,10/11,10/13,11/1,11/9,11/11,12/3,12/9,12/13,13/1,13/5,13/11,14/3,15/1,15/3,15/5,15/11,15/13,16/1,16/3,16/7,16/9,16/13}{
\begin{scope}[shift={(\x,\y)}]
\draw (0,-1) rectangle (1,1);

\end{scope}
}
\foreach \x/\y in {1/7,2/5,2/13,3/11,4/9,4/11,5/9,5/13,6/7,6/11,7/7,8/5,8/13,9/3,9/11,10/9,11/7,11/13,12/11,13/13,14/1,14/7,14/13,15/7,16/5}{
\begin{scope}[shift={(\x,\y)}]
\draw[thick,gray] (1,-1) -- (0,1);
\draw (0,-1) rectangle (1,1);

\end{scope}
}

\end{tikzpicture}
\end{center}
\caption{A spacetime diagram of $g$, showing the dynamics of the particles. The thick lines mark the borders of segments, and the circles denote the final state of $M$. Other information from the first track is not shown. The particles are drawn in gray for clarity.}
\label{fig:Particles}
\end{figure}
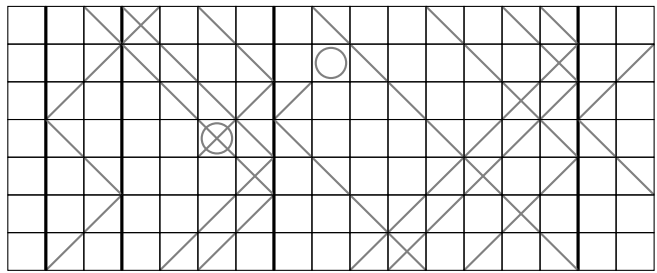

Now, let $w \in \Sigma^*$ be arbitrary, and denote by $L(w) \subset R^*$ the regular language
\begin{equation}
\label{eq:Regu}
	(B,\rightarrow,\emptycell{})
	(B,\leftarrow,\emptycell{})^*
	(B,q_0,\emptycell{})
	(w \times (\rightarrow,\emptycell{})^{|w|})
	(B,\rightarrow,\emptycell{})^*
	(B,\rightarrow,\rightcell{})
\end{equation}
The words of the language $L(w)$ consist of the left border of a segment, then some `empty' cells, followed by the initial state of $M$ and its input word, then more empty cells, and finally a right-moving particle. Let $U(w) \subset (R^3)^\N$ be the open set of all configurations that have a prefix $v \in (R^3)^*$ such that the middle components of the triples in $v$ form a word in $L(w)$, and none of the letters of $v$ contain a left-moving particle. We claim that $\tau_{h,1} \cap U(w)$ is nonempty if and only if $M$ halts on $w$, which is $\Sigma^0_1$-complete.

First, assume that $M$ halts on $w$ after exactly $2n+2$ steps, where $n \geq |w|$, and define $x \in R^\Z$ by
\[ x_i = \left\{ \begin{array}{ll}
	(B,\rightarrow,\emptycell{}), & \mbox{if~} i \leq -n, \\
	(B,\leftarrow,\emptycell{}), & \mbox{if~} -n < i < 0, \\
	(B,q_0,\emptycell{}), & \mbox{if~} i = 0, \\
	(w_{i-1},\rightarrow,\emptycell{}), & \mbox{if~} 1 \leq i \leq |w|, \\
	(B,\rightarrow,\emptycell{}), & \mbox{if~} i > |w|. \\
\end{array} \right.\]
Denote $y = h^{-n}(x)$. We claim that $(h^i(y)_{[-1,1]})_{i \in \N} \in U(w)$. To prove that, we first remark that for all $k, \ell \in \Z$ such that $3|\ell| \geq |k|$ we have $\pi_S(g^k(x)_\ell) = \pi_S(x_\ell)$, since only the single Turing Machine head can introduce changes to the $S$-component of the configuration, and it only moves every third step by assumption. This means that for all $i \in \Z$ we have $\pi_S(h^i(y)_0) = \pi_S(x_{i-n})$.

Since $M$ does not halt in $2n$ steps, the second track of every configuration in $\{ h^i(y) \;|\; i \in \{0, \ldots, 2n\} \}$ contains no particles. But since $M$ halts at step $2n+2$, there exists $k \in \Z$ with $|k| \leq \frac{n}{3}$ such that $\pi_D(h^{2n+1}(y)_{k-n-1}) = \pi_D(g^{2n+2}(x)_k) = \rightcell{}$. Since the single Turing Machine head will not enter the final state for another $4n+4$ steps, we have $\pi_D(h^i(y)_0) = \emptycell{}$ for every $i \in \{2n+1, \ldots, 3n-k+1\}$, and $\pi_D(h^{3n-k+2}(y)_0) = \rightcell{}$. Furthermore, no letter in $(h^i(y)_{[-1,1]})_{i \in \N}$ contains a left-moving particle, and together with the previous paragraph, this shows that $(h^i(y)_{[-1,1]})_{i \in \N} \in U(w)$, and thus $\tau_{h,1} \cap U(w) \neq \emptyset$. See Figure~\ref{fig:Dynamics} for a visualization that will also be helpful in the converse direction.

\begin{figure}[htp]
\begin{center}
\begin{tikzpicture}[scale=1.85]

\fill[black!15] (1,2) -- (1.1,2.3) -- (1.1,2.9) -- (1.2,3.2) -- (1.2,3.8) -- (1.3,4.1) -- (1.3,2);

\node (w) at (1.5,1.6) {$w$};
\node (w2) at (1.16,2) {};
\draw (w) edge [out=180,in=270,->] (w2);

\draw[very thick] (0,0) -- (0,7.5);
\draw (-.2,2) -- (4.1,2);
\node[left] () at (-.2,2) {$x$};
\draw[dotted] (0,0) -- (3.75,7.5);
\fill (1,2) circle (.05);
\draw[densely dotted] (1,2)
-- ++(.1,.3) -- ++(-.1,.3) -- ++(.1,.3) -- ++(.1,.3) -- ++(-.1,.3) -- ++(.1,.3) -- ++(.1,.3) -- ++(.1,.3) -- ++(-.1,.3);
\draw[densely dotted] (1,2)
-- ++(.1,-.3) -- ++(-.1,-.3) -- ++(.1,-.3) -- ++(.1,-.3) -- ++(-.1,-.3) -- ++(.1,-.3) -- ++(2/30,-.2);
\draw[densely dotted] (1.3,4.7)
-- ++(.1,.3) -- ++(-.1,.3) -- ++(-.1,.3) -- ++(-.1,.3) -- ++(.1,.3) -- ++(-.1,.3) -- ++(-.1,.3) -- ++(.1,.3) -- ++(-.1,.3) -- ++(1/30,.1);
\fill (1,7.4) circle (.05);
\draw (1.3,4.7) -- (4.1,7.5);
\draw[dashed] (1.3,4.7) -- (0,3.4) -- (3.4,0);
\draw (-.2,0) -- (4.1,0);
\node[left] () at (-.2,0) {$g^{-2n}(x) = \sigma^n(y)$};
\draw (-.2,4.7) -- (4.1,4.7);
\draw (1.3,4.7) -- ++(0,.1);
\node[above] () at (1.3,4.8) {$k$};
\filldraw[fill=black!30] (1.3,4.7) circle (.05);
\node[left] () at (-.2,4.7) {$g^{2n+2}(x)$};
\node[below] () at (0,0) {$-n$};
\draw (1,2) -- ++(0,.1);
\node[above] () at (1,2.1) {$0$};
\draw (3.4,0) -- ++(0,.1);
\node[below] () at (3.4,0) {$m$};
\draw (3.4,6.8) -- ++(0,.1);
\node[above] () at (3.4,6.9) {$m$};
\draw (-.2,6.8) -- (4.1,6.8);
\node[left] () at (-.2,6.8) {$g^{2m}(x)$};

\draw (1.3,1.95) -- ++(0,.1);


\end{tikzpicture}
\end{center}
\caption{A schematic spacetime diagram of $g$. The black (gray) circles represent the initial (final) states of $M$ (at coordinates $0$ and $k$, respectively), and the densely dotted line traces the read-write head of $M$ as it carries out its computation. The sparsely dotted line corresponds to the central coordinates of the configurations $h^{i-n}(x)$ for $i \in \N$. The thick vertical line is a border of segments, the diagonal line is a particle, and the dashed continuation is its hypothetical path assuming that $M$ never halted. The particle meets the sparsely dotted line at coordinate $m$. Each horizontal line represents a configuration. The shaded area represents the letters of the input word $w \in \Sigma^*$ in $x$ that the head of $M$ has not yet read (which the sparsely dotted line goes through).}
\label{fig:Dynamics}
\end{figure}
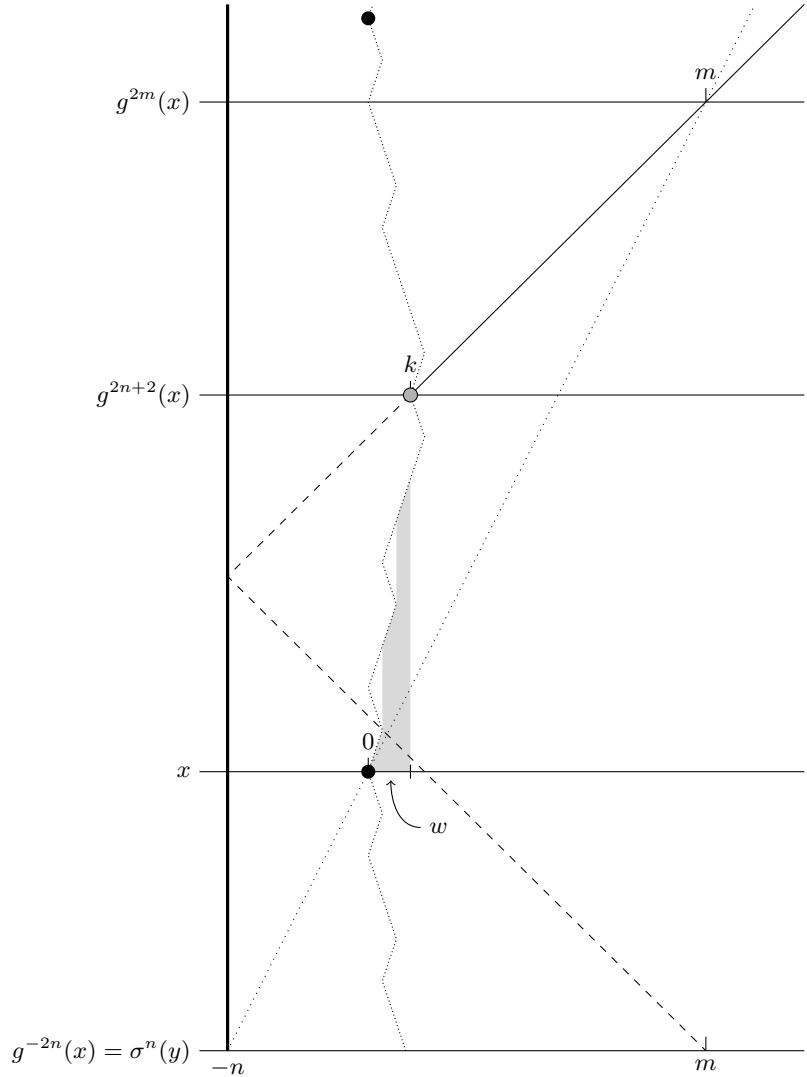

Second, assume that $y \in R^\Z$ is such that $(h^i(y))_{i \in \N} \in U(w)$, with the $*$-symbols in the definition of $L(w)$ being replaced by numbers $n-1, m-|w|-1 \in \N$, so that we have $h^n(y)_0 = (B,q_0,\emptycell{})$ and $h^{n+m}(y)_0 = (B,\rightarrow,\rightcell{})$. Let $x = h^n(y)$. Since the segments used in the simulation of $M$ never move under the action of $g$, the interval $I = [-n,m]$ is contained in a single segment of $y$, and contains its left endpoint. The segment contains a Turing Machine head, and as above, for all $k, \ell \in \Z$ such that $3|\ell| \geq 2|k|$, we have $\pi_S(h^{k+n}(y)_{\ell-k}) = \pi_S(g^{2k}(x)_\ell) = \pi_S(x_\ell)$. In particular, the word $\pi_S(x_I)$ contains a Turing Machine head in state $q_0$ followed by the input word $w$, surrounded by the blank symbols.

Consider now the two remaining tracks of $x$ and $y$. We know from the above that $g^{2m+2n}(y)_{m+n} = h^{m+n}(y)_0 = (B,\rightarrow,\rightcell{})$ (the intersection of the particle and the densely dotted line in Figure~\ref{fig:Dynamics}). This implies that either the coordinate $g^{2m+2n-i}(y)_{m+n-i}$ contains the final state of $M$ for some $i \in \{0, \ldots, m+n\}$, or each of them contains a right-moving particle. In the latter case, $g^{m+n-1}(y)_0$ contains a left-moving particle, since it is next to a segment border, and so does $g^{m+n-1-j}(y)_j$ for all $j \in \{0, \ldots, m+n\}$ (see the dashed line in the figure). Now, at $j = 0$ we have $m+n-1-j > 2j$, while at $j = m+n$, the opposite holds. Thus we have $|(m+n-1-j_0)/2 - j_0| \leq 1$ for some $j_0$ such that $m+n-1-j_0$ is even; denote $m+n-1-j_0 = 2\ell$. Then $0 \leq \ell < m+n$, and the cell
\[ h^\ell(y)_b = g^{2\ell}(y)_{\ell+b} = g^{m+n-1-j_0}(y)_{j_0} \]
contains a left-moving particle for some $b \in \{-1,0,1\}$ (the intersection of the dashed line with the densely dotted line in the figure). But this is impossible since $(h^i(y))_{i \in \N} \in U(w)$, so the choice that none of the cells $g^{2m+2n-i}(y)_{m+n-i} = g^{2m-i}(x)_{m-i}$ contain the final state of $M$ was incorrect. Thus one of them does, implying that $M$ eventually halts, since the initial and final states lie in the same segment (at different times). This finishes the proof of $\tau_{h,1} \cap U(w) \neq \emptyset$ being equivalent to $M$ halting on $w$.

Finally, we show that $h$ is chaotic, and by Lemma~\ref{lem:Chaotic}, it suffices to prove transitivity. The proof is standard for reversible CA that have `shifting barriers', in our case borders of segments. Let thus $u, v \in R^n$ be two words of the same length $n$. Define $w = (B,\leftarrow,\emptycell{}) (B,\rightarrow,\emptycell{})$. Then for all $x, y \in R^\Z$ with $x_{[0,1]} = y_{[0,1]} = x_{[n+2,n+3]} = y_{[n+2,n+3]} = w$ and $x_{[2,n+1]} = y_{[2,n+1]}$, we have $g^i(x)_{[2,n+1]} = g^i(y)_{[2,n+1]}$ for all $i \in \Z$. This is because the evolution of a segment under $g$ is independent of other segments.

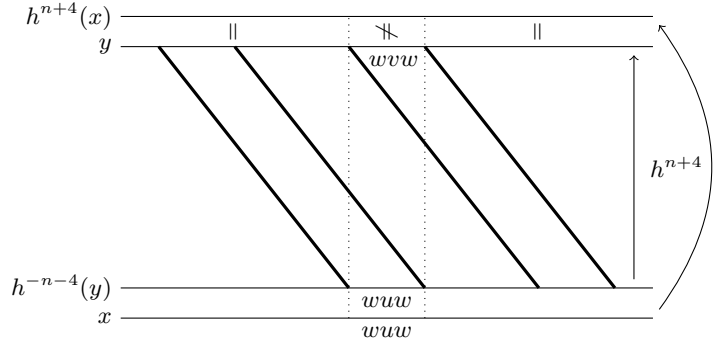
\begin{figure}[htp]
\begin{center}
\begin{tikzpicture}[yscale=.8]

\draw (0,0) -- (7,0);
\node[left] () at (0,0) {$h^{-n-4}(y)$};

\draw (0,-.5) -- (7,-.5);
\node[left] () at (0,-.5) {$x$};

\draw (0,4.5) -- (7,4.5);
\node[left] () at (0,4.5) {$h^{n+4}(x)$};
\node () at (1.5,4.25) {\rotatebox{90}{$=$}};
\node () at (3.5,4.25) {\rotatebox{90}{$\neq$}};
\node () at (5.5,4.25) {\rotatebox{90}{$=$}};

\draw (0,4) -- (7,4);
\node[left] () at (0,4) {$y$};

\node (xd) at (7,-.5) {};
\node (yd) at (6.75,0) {};
\node (yu) at (6.75,4) {};
\node (xu) at (7,4.5) {};
\draw[->] (xd) edge [bend right] node [left] {$h^{n+4}$} (xu);
\draw[->] (yd) edge (yu);

\node[below] () at (3.5,-.5) {$wuw$};
\node[below] () at (3.5,0) {$wuw$};
\node[below] () at (3.59,4) {$wvw$};

\draw[very thick] (.5,4) -- (3,0);
\draw[very thick] (1.5,4) -- (4,0);
\draw[very thick] (3,4) -- (5.5,0);
\draw[very thick] (4,4) -- (6.5,0);

\draw[dotted] (3,-.5) -- (3,4.5);
\draw[dotted] (4,-.5) -- (4,4.5);

\end{tikzpicture}
\end{center}
\caption{A schematic spacetime diagram of $h$. The vertical lines represent configurations, and each thick line is a border of segments. The dotted lines mark the interval $[0, n+3]$.}
\label{fig:Chaotic}
\end{figure}

Now, let $x \in R^\Z$ be such that $x_{[0,n+3]} = w u w$, and let $y \in R^\Z$ be defined by
\[ y_i = \left\{ \begin{array}{ll}
	(w v w)_i, & \mbox{if~} i \in [0,n+3], \\
	h^{n+4}(x)_i, & \mbox{otherwise.}
\end{array} \right. \]
By applying the above argument to $y$, which satisfies $y_{[n+4,n+5]} = y_{[2n+6,2n+7]} = w$, we have that $h^{-n-4}(y)_{[2,n+1]} = u$, and by definition $y_{[2,n+1]} = v$. This shows that $h$ is transitive, and thus chaotic. See Figure~\ref{fig:Chaotic} for a visualization of this argument.
\qed
\end{proof}

The regular expression~\eqref{eq:Regu} used in this construction is \emph{local} in the sense that it can be recognized by a DFA whose state only depends on the $n$ symbols it last read. This also applies to the regular language used in the definition of $U(w)$, where the small extra condition of not having left-moving particles in the neighboring coordinates was added.

The reversible cellular automaton we constructed above also has a maximally hard infinite time prediction problem, provided that we choose the machine $M$ correctly.

\begin{theorem}
\label{thm:Infinite}
There exists a chaotic reversible cellular automaton whose infinite-time prediction problem is $\Sigma^1_1$-complete, even when restricted to the radius-one partition $\mathcal{C}_1$.
\end{theorem}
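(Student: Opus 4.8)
The plan is to reuse the construction of Theorem~\ref{thm:Finite} almost verbatim, but with the reversible Turing machine $M$ replaced by the machine of Lemma~\ref{lem:RevTMPlus}, and to replace the finite-time acceptance pattern by an infinite-time one recognized by a Muller automaton. Recall that for the machine $M'$ of Lemma~\ref{lem:RevTMPlus}, the set $L = \{ w \in \{0,1\}^* \mid \exists u \in (0^*1)^\omega : M' \text{ never halts on } w\#u \}$ is $\Sigma^1_1$-complete, and $M'$ never steps left of the origin on right-infinite inputs. So I would build $h = \sigma \circ g^2$ exactly as before, using $M'$ in place of $M$, and then design, for each fixed $w \in \{0,1\}^*$, a Muller automaton $A_w$ over the alphabet $S^3$ (or $R^3$, matching the $\mathcal{C}_1$ partition) such that $\tau_{h,1} \cap L_{A_w} \neq \emptyset$ holds if and only if $w \in L$.

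The Muller automaton $A_w$ should accept an itinerary if and only if it is the $h$-itinerary of a configuration $y$ whose relevant segment contains, reading left to right, the left border of a segment, some empty cells, then the head of $M'$ in state $q_0$, then the word $w$, then a $\#$, then an infinite sequence belonging to $(0^*1)^\omega$ — and moreover no left-moving particle and no final state of $M'$ is ever seen at the central coordinate. Equivalently, $A_w$ first reads a finite prefix that deterministically checks the initial pattern $\rightarrow\leftarrow^*q_0\,w\,\#$ and then enters a ``maintenance'' mode in which it verifies, via its set $F$ of accepting loop-sets, that (i) the symbols streaming past are consistent with a tape word in $(0^*1)^\omega$ (this is a regular, in fact $\omega$-regular, condition on the right-infinite tape content, checkable in the limit), and (ii) the central coordinate never carries a left-moving particle or the halting state. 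Because the compartments never communicate and are shifted steadily left, the trace at coordinates $[-1,1]$ of a suitable starting configuration exactly reveals, column by column, the tape contents of the single compartment; the key point — already exploited in Theorem~\ref{thm:Finite} — is that a left-moving particle appears at the center if and only if a halting signal was emitted, and it cannot be manufactured any other way given the absence of an initial left-moving particle. Thus seeing no left-moving particle forever in the trace is equivalent to $M'$ never halting on $w\#u$ for the tape word $u$ that the trace encodes, and allowing $A_w$ to range over all tape words in $(0^*1)^\omega$ realizes the existential quantifier $\exists u$.

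For the forward direction (if $w \in L$, then $\tau_{h,1}\cap L_{A_w}\neq\emptyset$), pick $u \in (0^*1)^\omega$ witnessing non-halting, build the configuration $x$ with the left border far to the left, $\leftarrow$'s, then $(q_0,\emptycell{})$, then $w$, then $\#$, then $u$ on the tape, all particles absent; then set $y = h^{-N}(x)$ for a suitable offset $N$ (growing with the distance from the border), exactly as in Theorem~\ref{thm:Finite}, so that the border, the head, $w$, and ever-deeper prefixes of $u$ pass through coordinate $0$; since $M'$ never halts, no final state and hence no left-moving particle ever reaches the center, so the itinerary is accepted by $A_w$. For the converse, given a configuration $y$ whose $h$-itinerary is accepted by $A_w$, the finite prefix forces the compartment containing coordinate $0$ to hold $q_0 w \#$ followed by a tape word that the acceptance condition forces into $(0^*1)^\omega$, and the maintenance condition forces no halting signal ever to appear; running the argument of Theorem~\ref{thm:Finite} in reverse (the dashed-line bouncing-particle argument) shows $M'$ does not halt on that input, so $w\in L$. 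Finally, chaos is inherited verbatim: the transitivity argument of Theorem~\ref{thm:Finite}, using the shifting segment borders and the word $w=(B,\leftarrow,\emptycell{})(B,\rightarrow,\emptycell{})$, does not depend on which reversible machine is simulated, so Lemma~\ref{lem:Chaotic} again gives Devaney-chaos; and the partition used is again $\mathcal{C}_1$.

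The main obstacle I expect is the careful design of the Muller automaton $A_w$ so that its $F$-sets genuinely capture both ``the right-infinite tape content lies in $(0^*1)^\omega$'' and ``the trace encodes a faithful run of $M'$ on that input'' purely from the central three-cell window — in particular, one must check that the $\omega$-regular constraint on $u$ can be read off the trace despite the steady leftward shift (each tape cell is seen exactly once, at a predictable time, so this is fine, but it requires a short argument), and that requiring the infinitely-often-visited state set of $A_w$ to lie in $F$ is equivalent to the conjunction of a safety condition (no left-moving particle, no halt, ever) and a liveness/regularity condition on $u$. Translating ``$M'$ never halts'' into ``the trace never shows a left-moving particle at the center'' reuses Theorem~\ref{thm:Finite}'s analysis and should be routine; the genuinely new bookkeeping is matching the $(0^*1)^\omega$ shape of the oracle tape to an acceptance condition on the itinerary, and then invoking the $\Sigma^1_1$-completeness of $L$ to conclude that the infinite-time prediction problem, restricted to $\mathcal{C}_1$, is $\Sigma^1_1$-hard (membership in $\Sigma^1_1$ being immediate from the form of the prediction problem).
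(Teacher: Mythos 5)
Your proposal is correct and follows essentially the same route as the paper's proof: keep the automaton $h$ of Theorem~\ref{thm:Finite}, substitute the machine of Lemma~\ref{lem:RevTMPlus}, and replace the finite acceptance pattern by the $\omega$-regular pattern consisting of the segment border, the head in state $q_0$, the word $w\#$, an infinite particle-free tail in $(0^*1)^\omega$, and the prohibition of left-moving particles, using the creation of a particle at halting time to characterize non-halting runs. The only differences are cosmetic: the paper places the head immediately next to the segment border (possible since the machine never moves left of the origin) and sidesteps your worry about explicitly building the Muller automaton by appealing to the standard fact that $L^\omega$ is Muller-recognizable whenever $L$ is regular.
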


\section{Further Discussion}


The automaton $h$ constructed in Theorem~\ref{thm:Finite} and Theorem~\ref{thm:Infinite} is chaotic, but it is not \emph{expansive}. A reversible cellular automaton $h : S^\Z \to S^\Z$ is expansive, if there exists $r \in \N$ such that for all distinct pairs $x \neq y \in S^\Z$, there exists $k \in \Z$ with $h^k(x)_{[-r,r]} \neq h^k(y)_{[-r,r]}$. Intuitively, this means that all discrepancies between two configurations are propagated to the left and to the right, if we consider both their past and future itineraries. Expansivity can thus be seen as an extreme form of sensitivity to initial conditions, and it makes sense to ask whether an expansive CA can be computationally universal.

Unfortunately, the trace shifts of expansive cellular automata are a deep and mysterious subject, and not much is known about them. In \cite{Bo04}, it was shown that if $h$ is an expansive CA, then all of its wide enough traces ($\tau_{h,n}$ for large enough $n \in \N$) have a property called \emph{total chain transitivity}, which makes it difficult to find much structure in them. Also, in \cite{Na08}, it was shown that if $h$ has memory or anticipation $0$, then all wide enough traces are actually SFTs. It is currently unknown whether this holds for all expansive CA (as conjectured in \cite{Na95}), but it would directly imply that their prediction problems are decidable, at least for a fixed clopen partition.



\section{Acknowledgments}

The authors are thankful to the anonymous referees for their valuable comments that helped to improve the quality and readability of this article.

\bibliographystyle{plain}
\bibliography{../../../bib/bib}{}

\vfill
\pagebreak

\section*{Appendix}

Proof of Theorem~\ref{thm:Infinite}:

\begin{proof}
The CA $h : R^\Z \to R^\Z$ and the proof idea are exactly the same as in Theorem~\ref{thm:Finite}, but the machine $M$ now needs to satisfy the claim of Lemma~\ref{lem:RevTMPlus}.

Now, let $w \in \{0,1\}^*$, and define the Muller-recognizable language $L(w) \subset R^\N$ by the infinite regular expression
\begin{equation}
\label{eq:InfRegu}
	(B,\rightarrow,\emptycell{})
	(B,q_0,\emptycell{})
	(w \times (\rightarrow,\emptycell{})^{|w|})
	(\#,\rightarrow,\emptycell{})
	((0,\rightarrow,\emptycell{})^* (1,\rightarrow,\emptycell{}))^\omega
\end{equation}
It is similar to \eqref{eq:Regu}, except that the head of $M$ is situated right next to the end of the segment, and after the input word $w$ we may have an infinite tail of $0$s and $1$s, but no particles at all. As before, let $U(w) \subset (R^3)^\N$ be the set of configurations whose middle letters form a configuration in $L(w)$, and none of the letters of which contains a left-moving particle.

With a proof mimicking that of Theorem~\ref{thm:Finite}, we can now show that $\tau_{f,1} \cap U(w) \neq \emptyset$ if and only if there exists $u \in \{0,1\}^\N$ such that $M$ never halts on $w \# u$, and this is $\Sigma^1_1$-complete by the choice of $M$. Namely, if $M$ never halts, then the preimage of a configuration $x \in R^\Z$ containing the initial configuration of $M$ with input $w \# u$ for some $u \in (0^*1)^\omega$ has its trace in $U(w)$ since no particles are ever introduced. Conversely, if such a configuration exists, then it necessarily simulates a non-halting computation of $M$, since a particle must be either created or destroyed at the time of halting, both of which are impossible. Furthermore, $h$ is chaotic by the same argument as before.
\qed
\end{proof}

\end{document}